\documentclass[a4paper,12pt,twoside,reqno]{smfart}
\usepackage[utf8x]{inputenc}
\usepackage[T1]{fontenc}
\usepackage[frenchb]{babel}
\usepackage{amssymb,amsmath}
\usepackage{smfthm}

\usepackage{stackrel}
\usepackage{pifont}
\usepackage{cases}

\usepackage{graphicx}

\usepackage{enumitem}
\usepackage{multicol}

\usepackage{amscd}
\usepackage[all,cmtip]{xy}


%
%
%

\theoremstyle{plain}
\newcounter{moncompteurtheoa}
\setcounter{moncompteurtheoa}{0}
\newenvironment{theo1}{\refstepcounter{moncompteurtheoa}\begin{enonce*}[plain]{Th\'eor\`eme \themoncompteurtheoa}}{\end{enonce*}}

\setcounter{tocdepth}{3}

\frenchbsetup{PartNameFull=false}


\author{Hugues Bauch\`ere}
\address{
Laboratoire de math\'ematiques Nicolas Oresme, CNRS UMR 6139,
Universit\'e de Caen, Campus II, BP 5186,
14032 Caen Cedex, France}
\email{hugues.bauchere@unicaen.fr}
\urladdr{http://www.math.unicaen.fr/~bauchere/}

\title{Quelques remarques \`a propos d'un th\'eor\`eme de Checcoli}

\date{\today}


\DeclareMathOperator{\id}{id}

\DeclareMathOperator{\Gal}{Gal}

\newcommand{\N}{\mathbb{N}}

\newcommand{\F}{\mathbb{F}}
\newcommand{\Z}{\mathbb{Z}}

\newcommand{\Pcal}{\mathcal{P}}

\newcommand{\pcar}{\mathfrak{p}}
\newcommand{\qcar}{\mathfrak{q}}
\newcommand{\kbar}{\overline{k}}

\newcommand{\Mcal}{\mathcal{M}}
\newcommand{\cf}{\emph{cf.} }
\newcommand{\ie}{\emph{i.e.} }


\begin{document}

\raggedbottom

\frontmatter

\begin{abstract}
 Dans sa th\`ese (\cf \cite{SaraTh}), S. Checcoli montre, entre autres r\'esultats, que si $K$ est un corps de nombres
 et si $L/K$ est une extension galoisienne infinie de groupe de Galois $G$ d'exposant fini,
 alors les degr\'es locaux sur $L$ sont uniform\'ement born\'es en toutes les places de $K$.
 Dans cette article nous rassemblons deux remarques \`a propos de la g\'en\'eralisation
 du r\'esultat de S. Checcoli aux corps de fonctions de caract\'eristique positive.
 D'une part nous montrons un analogue de son th\'eor\`eme dans ce cadre,
 sous l'hypoth\`ese que l'exposant du groupe de Galois soit premier \`a $p$.
 D'autre part, nous montrons \`a l'aide d'un exemple que cette hypoth\`ese est en fait n\'ecessaire.
\end{abstract}

\alttitle{Some Remarks About a Checcoli Theorem}

\begin{altabstract}
 In his thesis (\cf \cite{SaraTh}), S. Checcoli shows that, among other results, if $K$ is a number field
 and if $L/K$ is an infinite Galois extension with Galois group $G$ of finite exponent,
 then $L$ has uniformly bounded local degrees at every prime of $K$.
 In this article we gather two remarks about the generalisation of S. Checcoli's result to function fields of positive characteristic.
 We first show an analogue of her theorem $2.2.2$ in this context,
 under the hypothesis that the Galois group exponent is prime to $p$.
 Using an example, we then show that this hypothesis is in fact necesary.
\end{altabstract}

\keywords{funtion field, positive characteristic, class field theory}

\maketitle

\renewcommand{\contentsname}{Sommaire}
\tableofcontents

\section*{Introduction}

%

Soient $K$\label{notaK5} un corps global\footnote{\ie un corps de nombres ou une extension finie de $\F_q(T)$.}
et $L/K$
une extension galoisienne infinie.
On dit que l'extension $L/K$ a ses degr\'es locaux
\emph{uniform\'ement born\'es} en toutes
(resp. en presque toutes\footnote{\ie en toutes sauf un nombre fini.}) les places de~$K$
s'il existe un entier $d_0\in\N$ tel que pour toutes (resp. presque toutes) les places $v$ de $K$,
on a $[L_w:K_v]\leq d_0$ pour toute place $w|v$ de~$L$. 

Dans sa th\`ese S. Checcoli d\'emontre entre autres r\'esultats le th\'eor\`eme suivant (\cf th. $2.2.2$ de \cite{SaraTh}):

\enlargethispage{1\baselineskip}

\begin{theo1}\label{thdeCheccoli}
 Soit $K$ un corps de nombres et $L/K$ une extension galoisienne infinie.
 Alors les assertions suivantes sont \'equivalentes:
 \begin{enumerate}
  \item l'extension $L/K$ a ses degr\'es locaux uniform\'ement born\'es en toutes les places de $K$;
  \item l'extension $L/K$ a ses degr\'es locaux uniform\'ement born\'es en presque toutes les places de $K$;
  \item le groupe de Galois de l'extension $L/K$ est d'exposant fini.
 \end{enumerate}
\end{theo1}
Ensuite S. Checcoli et P. D\`ebes (\cf \cite{CHECCODEB}) ont g\'en\'eralis\'e ce r\'esultat \`a certaines classes de corps de fonctions.

Une question naturelle est donc de savoir si ce r\'esultat est encore vrai dans le cadre des corps de fonctions en caract\'eristique positive.
Nous montrons ici un analogue du th\'eor\`eme \ref{thdeCheccoli} en supposant de plus que l'exposant est premier \`a la caract\'eristique.
Cette hypoth\`ese n'est utile que dans la d\'emonstration de l'implication $3\Longrightarrow1$.
Nous donnons ensuite un exemple illustrant la n\'ecessit\'e de cette hypoth\`ese.

\mainmatter

\section{Analogue du th\'eor\`eme \ref{thdeCheccoli} en caract\'eristique positive}

Nous reprenons la preuve du th\'eor\`eme \ref{thdeCheccoli} en lui apportant les modifications n\'ecessaires dans notre cadre:
celui d'une extension finie $K$ de $k:=\F_q(T)$.
Nous commen\c{c}ons par rappeler la d\'efinition suivante.

\begin{defi}
 Soit $G$ un groupe. On dit que $G$ est un \emph{groupe m\'etacyclique}
 s'il existe un sous-groupe normal $H$ de $G$ tel que $H$ et~$G/H$ soient cycliques.
\end{defi}

\begin{rema}
 Si $G$ est un groupe fini m\'etacyclique d'exposant~$b$, alors $G$ est d'ordre un diviseur de $b^2$.
\end{rema}

Nous pouvons maintenant \'enoncer un analogue en caract\'eristique positive du th\'eor\`eme \ref{thdeCheccoli}.

\begin{theo}\label{thexpborne}
 Soient $K/k$ une extension finie et $L/K$ une extension galoisienne infinie. 
 Si les degr\'es locaux de l'extension $L/K$ sont
 uniform\'ement born\'es en presque toutes les places de $K$,
 alors $\Gal(L/K)$
 est d'exposant fini.

 R\'eciproquement, si $\Gal(L/K)$ est d'exposant fini premier \`a la caract\'eristique de $K$,
 alors les degr\'es locaux de l'extension $L/K$
 sont uniform\'ement born\'es en toutes les places de $K$.
\end{theo}

\begin{proof}
 Supposons que presque toutes les places de $K$ aient leurs degr\'es locaux sur $L$ born\'es par $d_0\in\N$.
 Soit $S$ l'ensemble des places de $K$ dont le degr\'e local sur $L$ n'est par born\'e par~$d_0$.
 Alors, par hypoth\`ese,~$S$ est un ensemble fini.
 Soient $E/K$ une extension galoisienne finie incluse dans~$L$ et $\sigma\in\Gal(E/K)$.
 D'apr\`es le th\'eor\`eme de densit\'e de Tchebotarev (\cf th. $9.13 A$ de \cite{ROSEN}),
 il existe une place finie $\pcar\in\Mcal_K\setminus S$ non ramifi\'ee sur $E$
 telle que $\sigma$ appartient \`a la classe de conjugaison d'Artin de~$\pcar$ dans $\Gal(E/K)$
 (rappelons que d'apr\`es le corollaire $3.5.5$ de \cite{STICHT},
 comme l'extension $E/K$ est s\'eparable finie, presque toutes les places de~$K$ sont non ramifi\'ees dans $E$).
 Ainsi, si $\qcar$ est une place de $E$ au-dessus de $\pcar$, il existe un conjugu\'e $\eta\in\Gal(E/K)$ de $\sigma$
 qui engendre le groupe de d\'ecomposition de $\qcar$ sur $\pcar$ qui est cyclique (d'apr\`es le th\'eor\`eme $3.8.2$ de \cite{STICHT})
 et isomorphe \`a $\Gal(E_\qcar/K_\pcar)$,
 o\`u $E_\qcar$ et $K_\pcar$ sont respectivement les compl\'et\'es des corps $E$ et $K$ en~$\qcar$ et~$\pcar$.
 Or, par hypoth\`ese, $\#\!\left(\Gal(E_\qcar/K_\pcar)\right)\leq d_0$, ainsi $\sigma^{d_0!}=\eta^{d_o!}=\id$
 et donc $\Gal(E/K)$ est d'exposant born\'e par~$d_0!$.
 Comme $\Gal(L/K)$ est la limite projective de la famille $\{Gal(E/K)\}_E$ index\'ee par les extensions galoisiennes finies de~$K$ contenues dans $L$,
 le groupe $\Gal(L/K)$ est d'exposant born\'e par~$d_0!$.

 R\'eciproquement, supposons que $\Gal(L/K)$ soit d'exposant fini $b\in\N$ premier \`a $p$, la caract\'eristique de $K$.
 \'Ecrivons $L$ comme une r\'eunion croissante d'extensions galoisiennes finies $L_j/K$ de groupe de Galois $G_j$.
 Soit $w$ une place de $L$, pour chaque $j$ on note $v_j$ l'unique place de $L_j$ en-dessous de~$w$ et~$L_{j,v_j}$ le compl\'et\'e de $L_j$ en la place $v_j$.
 De m\^eme, on note~$v$ l'unique place de $K$ en-dessous de $w$ et $K_v$ le compl\'et\'e de~$K$ en la place~$v$.
 On rappelle que pour tout $j$, le quotient du groupe de d\'ecomposition par le groupe d'inertie de $v_j$ sur $v$ est isomorphe
 au groupe de Galois des corps r\'esiduels et qu'il est donc cyclique (engendr\'e par le Frobenius).
 On rappelle \'egalement que le groupe $\Gal(L_{j,v_j}/K_v)$ est isomorphe au groupe de d\'ecomposition de $v_j$ sur $v$ qui est un sous-groupe de~$G_j$.
 Donc $\Gal(L_{j,v_j}/K_v)$ est d'exposant un diviseur de $b$. De plus, comme $b$ est premier \`a $p$, il ne peut y avoir de ramification sauvage.
 Il y a donc deux cas possibles pour l'extension $L_{j,v_j}/K_v$:
 \begin{enumerate}
  \item si l'extension $L_{j,v_j}/K_v$ est non ramifi\'ee, alors, $\Gal(L_{j,v_j}/K_v)$ est cyclique d'ordre un diviseur de~$b$;
  \item si l'extension $L_{j,v_j}/K_v$ est mod\'er\'ement ramifi\'ee, alors d'apr\`es la proposition $3.8.5$ de \cite{STICHT}, le groupe d'inertie de $v_j$ sur $v$ est cyclique,
   tout comme le quotient du groupe de d\'ecomposition par le groupe d'inertie, ainsi $\Gal(L_{j,v_j}/K_v)$ est m\'etacyclique
   et donc d'ordre un diviseur de~${b^2}$. 
 \end{enumerate}
 Pour tout $j$, le degr\'e de l'extension $L_{j,v_j}/K_v$ est donc un diviseur de $b^2$.
 On obtient donc le r\'esultat souhait\'e par passage \`a la limite projective. 
\end{proof}

\section{Un contre-exemple en caract\'eristique positive}  

Dans le th\'eor\`eme \ref{thexpborne}, on a vu que si $\Gal(L/K)$ est d'exposant fini premier \`a la caract\'eristique de~$K$,
alors les degr\'es locaux de l'extension $L/K$
sont uniform\'ement born\'es en toutes les places de~$K$.
Dans cette partie nous allons donner un contre-exemple dans le cas o\`u l'exposant du groupe $\Gal(L/K)$ est divisible par $p$.
Soit $\Pcal$\label{notaPcal} l'ensemble des polyn\^omes irr\'eductibles et unitaires de $\F_q[T]$.
Alors l'ensemble $\Pcal\cup\left\{\frac{1}{T}\right\}$
est en bijection avec l'ensemble $\Mcal_k$\label{notaMcalk} des places de $k:=\F_q(T)$.
Si $a\in\Pcal\cup\left\{\frac{1}{T}\right\}$\label{notaa5},
on note $v_a:k\longrightarrow\Z\cup\{\infty\}$\label{notava}
la valuation associ\'ee \`a $a$ et~$k_{v_a}$\label{notakva} le compl\'et\'e de $k$ en $v_a$.
Si $E$ est une extension finie de $k$ et si $w$ est une place de $E$ qui prolonge $v_a$,
on normalise (exceptionnellement ici)~$w$ par la formule $w(\alpha)=v_a(\alpha)$ pour tout $\alpha\in k$.
On a donc: 
\[w(E)=\frac{1}{e}\Z\cup\{\infty\}\text{,}\]
o\`u $e:=e(w/v_a)$ est l'indice de ramification de $w$ sur $v_a$.


\begin{lemm}\label{lemPajirrkv}
 Soient $a\in\Pcal\cup\left\{\frac{1}{T}\right\}$ et $i\in\N\setminus p\N$\label{notai5}. On pose:
 \[\label{notaPai}
  P_{a,i}(X):=X^p-X-a^{-i}\text{.}
 \]
 Soit $\theta_{a,i}\in\kbar$\label{notathetaai}
 une racine de $P_{a,i}$. On a les propri\'et\'es suivantes:
 \begin{enumerate}
  \item toutes les racines de $P_{a,i}$ sont de la forme $\theta_{a,i}+\zeta$ avec $\zeta\in\F_p$;
  \item $w(\theta_{a,i})=-\frac{i}{p}\notin\Z$ pour toute place $w|v_a$ de $k(\theta_{a,i})$;
  \item le polyn\^ome $P_{a,i}$ est irr\'eductible sur $k$;
  \item l'extension $k(\theta_{a,i})/k$ est totalement ramifi\'ee au-dessus de $a$;
  \item l'extension $k(\theta_{a,i})/k$ est cyclique de degr\'e $p$;
  \item $w(\theta_{a,i})=0$ pour tout ${b\in\Pcal\!\cup\!\left\{\frac{1}{T}\right\}\!\setminus\!\{a\}}$ et toute place $w|v_b$ de $k(\theta_{a,i})$;
  \item l'extension $k(\theta_{a,i})/k$ est non ramifi\'ee au-dessus de toutes les places de $k$ diff\'erentes de $a$.
 \end{enumerate}
\end{lemm}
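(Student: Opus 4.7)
La strat\'egie est d'exploiter syst\'ematiquement l'\'equation d'Artin-Schreier $\theta_{a,i}^p-\theta_{a,i}=a^{-i}$. L'assertion (1) r\'esulte d'un calcul direct utilisant $\zeta^p=\zeta$ pour tout $\zeta\in\F_p$, qui donne $P_{a,i}(\theta_{a,i}+\zeta)=P_{a,i}(\theta_{a,i})+(\zeta^p-\zeta)=0$. Pour (2), j'observerais que l'hypoth\`ese $w(\theta_{a,i})\geq 0$ m\`enerait \`a $w(\theta_{a,i}^p-\theta_{a,i})\geq 0$, contradiction avec $w(a^{-i})=-i<0$; ainsi $w(\theta_{a,i})<0$, d'o\`u $w(\theta_{a,i}^p)=p\,w(\theta_{a,i})<w(\theta_{a,i})$ et $p\,w(\theta_{a,i})=-i$, donnant $w(\theta_{a,i})=-i/p\notin\Z$ puisque $\gcd(i,p)=1$.

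Les points (3) et (4) d\'ecoulent simultan\'ement de (2): l'indice de ramification $e(w/v_a)$ doit \^etre divisible par $p$ (car le d\'enominateur de $w(\theta_{a,i})$ vaut~$p$) et est born\'e par $[k(\theta_{a,i}):k]\leq\deg P_{a,i}=p$; ainsi $e(w/v_a)=p=[k(\theta_{a,i}):k]$, ce qui prouve \`a la fois l'irr\'eductibilit\'e de $P_{a,i}$ (qui est alors le polyn\^ome minimal de $\theta_{a,i}$) et la ramification totale de $k(\theta_{a,i})/k$ en~$a$. Pour (5), l'assertion (1) garantit que $\theta_{a,i}\mapsto\theta_{a,i}+1$ se prolonge en un $k$-automorphisme d'ordre $p$ de $k(\theta_{a,i})$, qui engendre ainsi $\Gal(k(\theta_{a,i})/k)\simeq\Z/p\Z$.

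Pour (6) et (7), je proc\'ederais \`a un examen cas par cas des valeurs de $v_b(a^{-i})$ pour $b\in\Pcal\cup\{1/T\}\setminus\{a\}$: on v\'erifie que $v_b(a^{-i})\geq 0$ dans tous les cas, avec \'egalit\'e lorsque $a$ et $b$ sont toutes deux des places finies distinctes. Le raisonnement de (2) donne alors $w(\theta_{a,i})\geq 0$ pour toute place $w|v_b$, et la r\'eduction modulo $w$ de l'\'equation d'Artin-Schreier produit un polyn\^ome s\'eparable sur le corps r\'esiduel, ce qui fournit directement (7) (absence de ramification sauvage). L'obstacle principal concerne (6) dans le cas o\`u $v_b(a^{-i})>0$, qui survient lorsque $a$ ou $b$ est la place \`a l'infini: il faudra alors analyser finement la factorisation de $P_{a,i}$ dans le compl\'et\'e $k_{v_b}$ via le lemme de Hensel pour identifier pr\'ecis\'ement les valuations des racines avant de conclure.
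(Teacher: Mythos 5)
Your treatment of (1)--(5) and (7) follows the paper's own route, executed with a bit more care: the ultrametric computation giving $w(\theta_{a,i})=-\tfrac{i}{p}$ for (2), the combination $p\mid e(w/v_a)$ and $e(w/v_a)\leq[k(\theta_{a,i}):k]\leq p$ for (3)--(4), and the Artin--Schreier automorphism $\theta_{a,i}\mapsto\theta_{a,i}+1$ for (5) (a point the paper dismisses with ``s'en d\'eduisent naturellement''). Your separability-of-the-reduction argument for (7) is in fact more complete than the paper's, which only records the value of $w(\theta_{a,i})$.

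The step you defer is exactly the delicate one, and you are right to flag it. The paper's proof of (6) simply asserts $w\!\left(\theta_{a,i}^p-\theta_{a,i}\right)=v_b\!\left(a^{-i}\right)=0$, which fails in precisely the cases you isolate: if $a\in\Pcal$ and $b=\frac{1}{T}$ then $v_{1/T}\!\left(a^{-i}\right)=i\deg a>0$, and if $a=\frac{1}{T}$ and $b=T$ then $v_T\!\left(T^i\right)=i>0$. Carrying out the Hensel analysis you postpone, one finds that in these cases $P_{a,i}$ reduces to $\prod_{\zeta\in\F_p}(X-\zeta)$ modulo the maximal ideal, hence splits completely over $k_{v_b}$; so $v_b$ splits completely in $k(\theta_{a,i})/k$, and for the place $w$ corresponding to the root congruent to $0$ one gets $w(\theta_{a,i})=v_b\!\left(a^{-i}\right)>0$. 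Assertion (6) as literally stated therefore fails for that one place; what survives, and what your argument does establish, is $w(\theta_{a,i})\geq0$ for every $w\mid v_b$ with $b\neq a$. That weaker bound is all that is needed downstream (in Remark \ref{propfamillealllindisj} only the fact that these valuations are nonnegative, hence strictly larger than the negative non-integral valuations occurring at the places with $a_j=a_n$, is used), and (7) holds in all cases by your separability argument. To finish, either complete the Hensel computation and state (6) with $w(\theta_{a,i})\geq0$, or restrict (6) to the case where $a$ and $b$ are both finite places.
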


\begin{proof}
 La propri\'et\'e $1$ est \'evidente. 
 V\'erifions la propri\'et\'e $2$. Soit $w|v_a$ une place de $k(\theta_{a,i})$. On a:
 \[w\!\left(\theta_{a,i}^p-\theta_{a,i}\right)=v_a\!\left(a^{-i}\right)=-i\]
 et
 \[w\!\left(\theta_{a,i}^p-\theta_{a,i}\right)\geq\min\{p\,w(\theta_{a,i}),w(\theta_{a,i})\}\text{,}\]
 ainsi $w(\theta_{a,i})<0$, donc la derni\`ere in\'egalit\'e est en fait une \'egalit\'e
 et on en d\'eduit que $w(\theta_{a,i})=-\frac{i}{p}$.
 Or $p\nmid i$ donc $w(\theta_{a,i})\notin\Z$ d'o\`u la propri\'et\'e $2$.
 
 Maintenant, on a $w(\theta_{a,i})=-\frac{i}{p}\in\frac{1}{e}\Z$, o\`u $e:=e(w/v_a)$ est l'indice de ramification de~$w$ sur~$v_a$.
 On en d\'eduit donc que $p|e$.
 Or, le polyn\^ome~$P_{a,i}$ \'etant de degr\'e $p$, l'extension $k(\theta_{a,i})/k$ est au plus de degr\'e $p$.
 Donc $e=p$.
 Les propri\'et\'es~$3$ \`a $5$ s'en d\'eduisent naturellement.
 Il ne nous reste donc plus qu'\`a montrer les propri\'et\'es $6$ et $7$.
 \[w\!\left(\theta_{a,i}^p-\theta_{a,i}\right)=v_b\!\left(a^{-i}\right)=0\]
 et
 \[w\!\left(\theta_{a,i}^p-\theta_{a,i}\right)\geq\min\{p\,w(\theta_{a,i}),w(\theta_{a,i})\}\text{,}\]
 ainsi $w(\theta_{a,i})=0$ et donc l'extension $k(\theta_{a,i})/k$ est non ramifi\'ee au-dessus de $b$.
\end{proof}

Dor\'enavant, pour all\'eger les notations, pour tous $a,b\in\Pcal\cup\left\{\frac{1}{T}\right\}$ et tout $i\in\N\setminus p\N$,
nous noterons encore $v_a$ une extension arbitraire de $v_a$ \`a~$k(\theta_{b,i})$.
Ainsi, nous aurons toujours:
\[v_{a}(\theta_{b,i})=
  \left\{\begin{array}{ll}
   -\frac{i}{p}\notin\Z & \text{si }b=a \\
   0 & \text{si } b\neq a
  \end{array}\right.\text{.}\]

Nous allons maintenant construire une $p$-extension ab\'elienne
\'el\'ementaire\footnote{\ie une extension dont le groupe de Galois est isomorphe \`a un produit de $\Z/p\Z$.}
dont le degr\'e local est infini au-dessus d'une place quelconque $a$ de $k$.
Pour cela, nous aurons besoin de montrer que certaines extensions sont lin\'eairement disjointes
(\cf \S$2.5$ de \cite{FRIEDJARDEN} pour ce qui concerne ces derni\`eres):

\begin{lemm}\label{propfamillealindisj}
 Soit $a\in\Pcal\cup\left\{\frac{1}{T}\right\}$.
 Alors, l'ensemble:
 \[\label{notaLambdaava}
  \Lambda_{a,v_a}:=\left\{k_{v_a}(\theta_{a,i})\,\big|\,i\in\N\setminus p\N\right\}
 \]
 forme une famille d'extensions lin\'eairement disjointes sur $k_{v_a}$.
\end{lemm}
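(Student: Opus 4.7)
Le plan est de traduire la disjonction linéaire en un énoncé d'indépendance $\F_p$-linéaire via la théorie d'Artin-Schreier, puis de le vérifier par un argument de valuation. D'après les propriétés $4$ et $5$ du lemme \ref{lemPajirrkv}, l'extension $k(\theta_{a,i})/k$ est totalement ramifiée au-dessus de $a$ et cyclique de degré $p$, donc $[k_{v_a}(\theta_{a,i}):k_{v_a}]=p$, et chaque élément de $\Lambda_{a,v_a}$ est une extension d'Artin-Schreier de degré $p$ de $k_{v_a}$, classifiée par la classe de $a^{-i}$ dans le quotient $k_{v_a}/\wp(k_{v_a})$, où $\wp(x):=x^p-x$.

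Par définition (\cf \S$2.5$ de \cite{FRIEDJARDEN}), il suffit de vérifier la disjonction linéaire de toute sous-famille finie indexée par un ensemble $I\subset\N\setminus p\N$ fini. Le composé correspondant est une $p$-extension abélienne élémentaire de $k_{v_a}$, dont le degré divise $p^{\#I}$; il vaut exactement $p^{\#I}$ (c'est-à-dire la disjonction linéaire a lieu) si et seulement si les classes $\overline{a^{-i}}$ pour $i\in I$ sont $\F_p$-linéairement indépendantes dans $k_{v_a}/\wp(k_{v_a})$.

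On est ainsi ramené à montrer: si $(c_i)_{i\in I}\in\F_p^I$ et $y\in k_{v_a}$ vérifient $y^p-y=\sum_{i\in I}c_i a^{-i}$, alors tous les $c_i$ sont nuls. L'argument clé repose sur une comparaison de valuations. Si $v_a(y)\geq 0$, alors $v_a(y^p-y)\geq 0$; sinon $v_a(y^p)=p\,v_a(y)<v_a(y)$, d'où $v_a(y^p-y)=p\,v_a(y)\in p\Z_{<0}$. Dans tous les cas, $v_a(y^p-y)\in\Z_{\geq 0}\cup p\Z_{<0}$. Or les $a^{-i}$ ($i\in I$) ont des $v_a$-valuations $-i$ deux à deux distinctes, de sorte que $v_a\bigl(\sum_{i\in I}c_i a^{-i}\bigr)=-i_0$, avec $i_0:=\max\{i\in I:c_i\neq 0\}$; comme $i_0\in\N\setminus p\N$, cet entier strictement négatif n'est pas divisible par $p$. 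On obtient ainsi une contradiction, sauf si tous les $c_i$ sont nuls.

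L'obstacle principal est mineur et consiste essentiellement à établir correctement le dictionnaire entre disjonction linéaire des extensions d'Artin-Schreier de $k_{v_a}$ et indépendance $\F_p$-linéaire des classes correspondantes dans $k_{v_a}/\wp(k_{v_a})$; la suite est une manipulation routinière des valuations exploitant le fait crucial que les exposants $i$ sont premiers à $p$.
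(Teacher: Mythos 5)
Votre démonstration est correcte, mais elle suit une route réellement différente de celle du texte. Vous invoquez la théorie d'Artin--Schreier pour traduire la disjonction linéaire d'une sous-famille finie $\{k_{v_a}(\theta_{a,i})\}_{i\in I}$ (extensions galoisiennes, donc disjonction linéaire équivaut à $[\text{compositum}:k_{v_a}]=p^{\#I}$) en l'indépendance $\F_p$-linéaire des classes $\overline{a^{-i}}$ dans $k_{v_a}/\wp(k_{v_a})$, puis vous concluez par l'observation que $v_a(y^p-y)$ appartient à $\Z_{\geq0}\cup p\Z_{<0}$ tandis que $v_a\bigl(\sum_i c_i a^{-i}\bigr)=-i_0$ est strictement négatif et premier à $p$; ce dictionnaire est un fait standard et votre vérification de valuation est sans faille (y compris le cas dégénéré $y^p=y$). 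Le texte, lui, évite tout recours à la dualité d'Artin--Schreier: il raisonne par minimalité d'une sous-famille non linéairement disjointe, développe explicitement $\theta_{i_n}$ dans la base $1,\theta_{i_{n-1}},\dots,\theta_{i_{n-1}}^{p-1}$, compare les coefficients via la relation $\theta^p=\theta+a^{-i}$ pour aboutir à une relation $\theta_{i_n}=\beta+\lambda_1\theta_{i_1}+\cdots+\lambda_{n-1}\theta_{i_{n-1}}$ avec $\beta\in k_v$ et $\lambda_j\in\F_p$, puis obtient la contradiction $v(\beta)=-\frac{1}{p}\max_j i_j\notin\Z$. Les deux arguments reposent in fine sur le même fait arithmétique ($p\nmid i$ et les $i$ deux à deux distincts); le vôtre est plus court et plus conceptuel au prix d'un ingrédient de théorie de Galois supplémentaire, celui du texte est plus long mais entièrement élémentaire et auto-contenu, et sa structure se transpose telle quelle à la version globale de la remarque \ref{propfamillealllindisj}.
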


\begin{proof}
 L'\'el\'ement $a$ de $\Pcal\cup\left\{\frac{1}{T}\right\}$ \'etant fix\'e, on simplifie les notations
 en posant $v:=v_a$ et $\theta_i:=\theta_{a,i}$ pour tout $i\in\N\setminus p\N$.

 Il faut montrer que toute sous-famille finie de $\Lambda_{a,v}$ est lin\'eairement disjointe sur $k_v$.
 Supposons par l'absurde qu'il existe une sous-famille finie $\left\{k_v(\theta_{i_j})\,\big|\,j\in\{1,\dots,n\}\right\}$
 de $\Lambda_{a,v}$ de cardinal $n\geq2$ qui ne soit pas lin\'eairement disjointe sur $k_v$.
 Sans perte de g\'en\'eralit\'e, on peut supposer que $n$ est minimal.
 L'extension $k_v(\theta_{i_n})/k_v$ \'etant galoisienne, on a:
 \begin{equation}\label{eqintersectiondeskvthetai}
  k_v(\theta_{i_n})\cap k_v(\theta_{i_1},\dots,\theta_{i_{n-1}})\neq k_v\text{.}
 \end{equation}
 En effet, si ce n'\'etait pas le cas 
 les extensions $k_v(\theta_{i_n})$ et $k_v(\theta_{i_1},\dots,\theta_{i_{n-1}})$ seraient lin\'eairement disjointes sur $k_v$
 (\cf remarque suivant le corollaire~$2.5.2$ de \cite{FRIEDJARDEN}),
 et donc aussi les extensions $k_v(\theta_{i_1}),\dots,k_v(\theta_{i_n})$, ce qui contredit le choix de $n$.
 Comme l'extension $k_v(\theta_{i_n})/k_v$ est de degr\'e~$p$ premier, on d\'eduit de (\ref{eqintersectiondeskvthetai}) que:
 \begin{equation}\label{eqraisareprd}
  \theta_{i_n}\in k_v(\theta_{i_1},\dots,\theta_{i_{n-1}})=k_v(\theta_{i_1},\dots,\theta_{i_{n-2}})(\theta_{i_{n-1}})\text{.}
 \end{equation}
 Il existe alors $\alpha_0,\dots,\alpha_{p-1}\in k_v(\theta_{i_1},\dots,\theta_{i_{n-2}})$ non tous nuls tels que:
 \[\theta_{i_{n}}=\alpha_0+\alpha_1\,\theta_{i_{n-1}}+\cdots+\alpha_{p-1}\,\theta_{i_{n-1}}^{p-1}\text{.}\]
 Or $\theta_{i_{n}}^p=\theta_{i_{n}}+a^{-i_{n}}$ et:
 \begin{multline*}
  \left(\alpha_0+\alpha_1\,\theta_{i_{n-1}}+\cdots+\alpha_{p-1}\,\theta_{i_{n-1}}^{p-1}\right)^p \\
   =\alpha_0^p+\alpha_1^p\left(\theta_{i_{n-1}}+a^{-i_{n-1}}\right)+\cdots+\alpha_{p-1}^p\,\left(\theta_{i_{n-1}}+a^{-i_{n-1}}\right)^{p-1}\text{.}
 \end{multline*}
 D'o\`u
 \begin{multline*}
  a^{-i_{n}}+\alpha_0+\alpha_1\,\theta_{i_{n-1}}+\cdots+\alpha_{p-1}\,\theta_{i_{n-1}}^{p-1} \\
   =\alpha_0^p+\alpha_1^p\left(\theta_{i_{n-1}}+a^{-i_{n-1}}\right)+\cdots+\alpha_{p-1}^p\,\left(\theta_{i_{n-1}}+a^{-i_{n-1}}\right)^{p-1}\text{.}
 \end{multline*}
 Comme les \'el\'ements $1,\theta_{i_{n-1}},\dots,\theta_{i_{n-1}}^{p-1}$ sont $k(\theta_{i_1},\dots,\theta_{i_{n-2}})$-lin\'eairement ind\'ependants,
 les coefficients des puissances de $\theta_{i_{n-1}}$ sont \'egaux dans l'\'egalit\'e pr\'ec\'edente.
 Ainsi, en comparant les coefficients des mon\^omes en~$\theta_{i_{n-1}}^{p-1}$, on obtient:
 \[\alpha_{p-1}=\alpha_{p-1}^p\text{,}\]
 d'o\`u on d\'eduit $\alpha_{p-1}\in\F_p$.
 En comparant les coefficients des mon\^omes en~$\theta_{i_{n-1}}^{p-2}$, on obtient:
 \begin{eqnarray*}
  \alpha_{p-2} & = & \alpha_{p-2}^p+\alpha_{p-1}^p\,(p-1)\,a^{-i_{n-1}} \\
  & = & \alpha_{p-2}^p-\alpha_{p-1}\,a^{-i_{n-1}}\text{.}
 \end{eqnarray*}
 Ainsi,
 \[\alpha_{p-2}^p-\alpha_{p-2}-\alpha_{p-1}\,a^{-i_{n-1}}=0\text{.}\]
 On a donc deux possibilit\'es: soit $\alpha_{p-1}=0$ et $\alpha_{p-2}\in\F_p$, soit $\alpha_{p-1}\neq0$
 et $\alpha_{p-2}=\alpha_{p-1}\,\theta_{i_{n-1}}+\zeta$ avec $\zeta\in\F_p$.
 Or, par minimalit\'e de $n$, on a ${\theta_{i_{n-1}}\notin k_v(\theta_{i_1},\dots,\theta_{i_{n-2}})}$,
 donc $\alpha_{p-1}=0$ et $\alpha_{p-2}\in\F_p$.
 De la m\^eme fa\c{c}on, on montre de proche en proche que $\alpha_{p-1}=\cdots=\alpha_2=0$ et $\alpha_1\in\F_p$.
 Il ne reste donc plus qu'\`a comparer les termes constants, \ie: 
 \[
  \alpha_0+a^{-i_{n}}=\alpha_0^p+\alpha_1^p\,a^{-i_{n-1}}
   =\alpha_0^p+\alpha_1\,a^{-i_{n-1}}\text{.}
 \]
 D'o\`u
 \[\alpha_0^p-\alpha_0-a^{-i_{n}}+\alpha_1\,a^{-i_{n-1}}=0\text{.}\]
 Deux cas s'offrent \`a nous:
 \begin{enumerate}
  \item $\alpha_1=0$ et $\alpha_0=\theta_{i_{n}}+\zeta$ avec $\zeta\in\F_p$;
  \item $\alpha_1\neq0$ et $\alpha_0=\theta_{i_{n}}-\alpha_1\,\theta_{i_{n-1}}+\zeta$ avec $\zeta\in\F_p$.
 \end{enumerate} 
 Dans le premier cas, on obtient:
 \[\theta_{i_{n}}\in k_v(\alpha_0)\subseteq k_v(\theta_{i_1},\dots,\theta_{i_{n-2}})\text{.}\]
 Donc les extensions $k_v(\theta_{i_1})$,\dots,$k_v(\theta_{i_{n-2}})$ et $k_v(\theta_{i_n})$
 ne sont pas lin\'eairement disjointes,
 ce qui contredit la minimalit\'e de $n$.
 Supposons que nous soyons dans le deuxi\`eme cas. Alors,
 il existe $\beta_{n-1}\in k_v(\theta_{i_1},\dots,\theta_{i_{n-2}})$ et $\lambda_{n-1}\in\F_p$ tels que:
 \[\theta_{i_{n}}=\beta_{n-1}+\lambda_{n-1}\,\theta_{i_{n-1}}\]
 (il suffit de poser $\beta_{n-1}:=\alpha_0-\zeta$ et $\lambda_{n-1}:=\alpha_1$).
 En reprenant le raisonnement pr\'ec\'edent et en rempla\c{c}ant pour tout $j\in\{1,\dots,n-2\}$
 la relation (\ref{eqraisareprd}) par:
 \[\theta_{i_n}\in k_v(\theta_{i_1},\dots,\theta_{i_{j-1}},\theta_{i_{j+1}},\dots,\theta_{i_{n-1}})(\theta_{i_j})\text{,}\]
 on montre que pour tout $j\in\{1,\dots,n-1\}$,
 il existe $\lambda_j\in\F_p$ \\
 et ${\beta_j\in k_v(\theta_{i_1},\dots,\theta_{i_{j-1}},\theta_{i_{j+1}},\dots,\theta_{i_{n-1}})}$ tels que:
 \[\theta_{i_{n}}=\beta_j+\lambda_j\,\theta_{i_j}\text{.}\]
 De plus, pour tout $j\in\{1,\dots,n-1\}$ on a:
 \begin{multline*}
  \theta_{i_{n}}-\lambda_1\,\theta_{i_1}-\cdots-\lambda_{n-1}\,\theta_{i_{n-1}} \\
   =\beta_j-\lambda_1\,\theta_{i_1}-\cdots-\lambda_{j-1}\,\theta_{i_{j-1}}-\lambda_{j+1}\,\theta_{i_{j+1}}-\cdots-\lambda_{n-1}\,\theta_{i_{n-1}}\text{,}
 \end{multline*}
 d'o\`u
 \[\theta_{i_{n}}-\lambda_1\,\theta_{i_1}-\cdots-\lambda_{n-1}\,\theta_{i_{n-1}}
  \in k_v(\theta_{i_1},\dots,\theta_{i_{j-1}},\theta_{i_{j+1}},\dots,\theta_{i_{n-1}})\text{,}\]
 et donc
 \[\theta_{i_{n}}-\lambda_1\,\theta_{i_1}-\cdots-\lambda_{n-1}\,\theta_{i_{n-1}}
  \in \bigcap_{1\leq j\leq n-1}k_v(\theta_{i_1},\dots,\theta_{i_{j-1}},\theta_{i_{j+1}},\dots,\theta_{i_{n-1}})\text{.}\]
 Or les corps $\left\{k_v(\theta_{i_j})\,\big|\,j\in\{1,\dots,n-1\}\right\}$ \'etant lin\'eairement disjoints
 (\`a nouveau par minimalit\'e de~$n$), on a:
 \[\bigcap_{1\leq j\leq n-1}k_v(\theta_{i_1},\dots,\theta_{i_{j-1}},\theta_{i_{j+1}},\dots,\theta_{i_{n-1}})=k_v\text{.}\]
 Ainsi, il existe $\beta\in k_v$ tel que:
 \[\theta_{i_{n}}=\beta+\lambda_1\,\theta_{i_1}+\cdots+\lambda_{n-1}\,\theta_{i_{n-1}}\text{.}\]
 On en d\'eduit que:
 \begin{equation}\label{eqetapelocal}
  v(\beta)=v(\theta_{i_{n}}-\lambda_1\,\theta_{i_1}-\cdots-\lambda_{n-1}\,\theta_{i_{n-1}})\text{.}
 \end{equation}
 Or $v(\theta_{i_j})=-\frac{i_j}{p}\notin\Z$ pour tout $j\in\{1,\dots,n\}$ d'apr\`es le lemme \ref{lemPajirrkv}
 et les entiers $i_j$ sont deux \`a deux distincts, d'o\`u:
 \[
  v(\theta_{i_{n}}-\lambda_1\,\theta_{i_1}-\cdots-\lambda_{n-1}\,\theta_{i_{n-1}})
   =\min_{1\leq j\leq n}\{v(\theta_{i_j})\} \\
   =-\frac{1}{p}\ \max_{1\leq j\leq n}\{i_j\}\notin\Z\text{.}
 \]
 Donc $v(\beta)\notin\Z$ ce qui contredit le fait que $\beta\in k_v$.
\end{proof}


\begin{rema}\label{propfamillealllindisj}
 \ \,Dans\ \,le\ \,lemme\ \,\ref{propfamillealindisj}\ \,nous\ \,n'avons\ \,fait\ \,varier\ \,que ${i\in\N\setminus p\N}$.
 Dans le cas global, on peut aussi faire varier $a\in\Pcal\cup\left\{\frac{1}{T}\right\}$.
 Plus pr\'ecis\'ement, on peut montrer que l'ensemble:
 \[\label{notaLambda5}
  \Lambda:=\left\{k(\theta_{a,i})\,\big|\,a\in\Pcal\cup\left\{\frac{1}{T}\right\},\,i\in\N\setminus p\N\right\}
 \]
 forme une famille d'extensions lin\'eairement disjointes sur $k$.
 La d\'emonstration est identique \`a celle du lemme \ref{propfamillealindisj}, \`a la diff\'erence qu'on consid\`ere
 une sous-famille finie $\left\{k(\theta_{a_j,i_j})\,\big|\,j\in\{1,\dots,n\}\right\}$
 de $\Lambda$ de cardinal $n\geq2$ qui ne soit pas lin\'eairement disjointe sur $k$,
 avec $n$ suppos\'e minimal. On montre alors (\cf \'egalit\'e (\ref{eqetapelocal})) qu'il existe $\beta\in k$ tel qu'on~ait:
 \[\theta_{a_n,i_{n}}=\beta+\lambda_1\,\theta_{a_1,i_1}+\cdots+\lambda_{n-1}\,\theta_{a_{n-1},i_{n-1}}\text{.}\]
 On en d\'eduit que:
 \[v_{a_n}(\beta)=v_{a_n}(\theta_{a_n,i_{n}}-\lambda_1\,\theta_{a_1,i_1}-\cdots-\lambda_{n-1}\,\theta_{a_{n-1},i_{n-1}})\text{.}\]
 Or, d'apr\`es le lemme \ref{lemPajirrkv}, pour tout $j\in\{1,\dots,n\}$, on a:
 \[v_{a_n}(\theta_{a_j,i_j})=
  \left\{\begin{array}{ll}
   -\frac{i_j}{p}\notin\Z & \text{si }a_j=a_n \\
   0 & \text{si } a_j\neq a_n
  \end{array}\right.\text{.}\]
 De plus, si $a_j=a_l=a_n$, alors $i_j\neq i_l$.
 Donc:
 \begin{eqnarray*}
  v_{a_n}(\theta_{a_n,i_{n}}-\lambda_1\,\theta_{a_1,i_1}-\cdots\!\!\!\! & - & \!\!\!\!\lambda_{n-1}\,\theta_{a_{n-1},i_{n-1}}) \\
   & = & \min_{1\leq j\leq n}\{v_{a_n}(\theta_{a_j,i_j})\} \\
   & = & -\frac{1}{p}\ \max\!\left\{i_j\,\big|\,j\in\{1,\dots,n\},\,a_j=a_n\right\}\notin\Z\text{.}
 \end{eqnarray*}
 Donc $v_{a_n}(\beta)\notin\Z$ ce qui contredit le fait que $\beta\in k$.
 L'ensemble $\Lambda$ forme donc bien une famille d'extensions lin\'eairement disjointes sur $k$.
\end{rema}

La proposition suivante nous donne un premier contre-exemple d'extension galoisienne infinie
d'exposant fini ayant une place dont tous les degr\'es locaux sont infinis.

\begin{prop}\label{coroalocalinf}
 Soient $a\in\Pcal\cup\left\{\frac{1}{T}\right\}$ et $L_a$\label{notaLa} le compositum des corps de la famille
 \[\Lambda_a:=\left\{k(\theta_{a,i})\,\big|\,i\in\N\setminus p\N\right\}\label{notaLambdaa}\text{.}\]
 Si $w$\label{notaw5} est une place de~$L_a$,
 on note $L_{a,w}$\label{notaLaw} le compl\'et\'e de $L_a$ en $w$.
 Alors, l'extension $L_a$ est une $p$-extension ab\'elienne de $k$ telle que
 pour toute place $w|v_a$ de $L_a$, l'extension $L_{a,w}/k_{v_a}$ est une $p$-extension ab\'elienne infinie.
\end{prop}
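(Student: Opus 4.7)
Mon plan consiste à décomposer la preuve en deux parties. Tout d'abord, l'abélianité et le caractère de $p$-extension de $L_a/k$ résulteront directement du lemme \ref{lemPajirrkv}: pour chaque $i\in\N\setminus p\N$, l'extension $k(\theta_{a,i})/k$ y est établie cyclique de degré $p$, donc abélienne. Comme $L_a$ est le compositum de cette famille d'extensions abéliennes sur $k$, il est galoisien sur $k$, et son groupe de Galois s'injecte dans le produit
\[
\prod_{i\in\N\setminus p\N}\Gal(k(\theta_{a,i})/k)\cong\prod_{i\in\N\setminus p\N}\Z/p\Z\text{.}
\]
Ceci fournira immédiatement que $L_a/k$ est une $p$-extension abélienne (en fait d'exposant $p$).

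Pour l'assertion locale, je fixerai une place $w$ de $L_a$ au-dessus de $v_a$. Pour toute sous-famille finie $\{i_1,\dots,i_n\}\subset\N\setminus p\N$, je considérerai la sous-extension $E_n:=k(\theta_{a,i_1},\dots,\theta_{a,i_n})$ de $L_a$ et la restriction $w_n$ de $w$ à $E_n$. La propriété $4$ du lemme \ref{lemPajirrkv} fournit la ramification totale de chaque $k(\theta_{a,i_j})/k$ au-dessus de $v_a$; en passant au compositum, l'extension $E_n/k$ est encore totalement ramifiée au-dessus de $v_a$, donc $w_n$ est l'unique place de $E_n$ au-dessus de $v_a$, et le complété $E_{n,w_n}$ contient les complétés locaux $k_{v_a}(\theta_{a,i_j})$ pour tout $j\in\{1,\dots,n\}$. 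Or, d'après le lemme \ref{propfamillealindisj}, la famille $\{k_{v_a}(\theta_{a,i_j})\}_{j=1}^n$ est linéairement disjointe sur $k_{v_a}$, de sorte que le compositum de ses membres est de degré $p^n$ sur $k_{v_a}$. Combiné à la majoration triviale $[E_{n,w_n}:k_{v_a}]\leq[E_n:k]=p^n$, ceci donnera $[E_{n,w_n}:k_{v_a}]=p^n$.

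En passant à la limite inductive sur les sous-familles finies, je conclurai que $L_{a,w}/k_{v_a}$ contient des sous-extensions abéliennes élémentaires de degré $p^n$ pour tout $n\geq1$; elle est donc infinie, et étant réunion croissante de $p$-extensions abéliennes finies, elle est elle-même une $p$-extension abélienne. L'obstacle principal — la non-trivialité du contrôle des degrés locaux, autrement dit le fait que la disjonction linéaire globale survit après complétion en $v_a$ — est déjà entièrement traité par le lemme \ref{propfamillealindisj}. La preuve de la proposition se réduit donc à articuler la ramification totale (qui identifie le complété à un compositum de complétés locaux) avec cette disjonction linéaire locale, ce qui est essentiellement formel.
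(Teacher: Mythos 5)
Votre démonstration est correcte et suit essentiellement la même voie que celle du papier, qui se contente d'invoquer le lemme \ref{propfamillealindisj} pour la disjonction linéaire locale et la cyclicité de degré $p$ de chaque $k(\theta_{a,i})/k$ pour l'abélianité; vous ne faites qu'expliciter le passage par les sous-compositums finis $E_n$ et la limite inductive. Notez seulement que la ramification totale de $E_n/k$ au-dessus de $v_a$ n'est pas nécessaire (et ne se déduit pas automatiquement d'un compositum d'extensions totalement ramifiées): il suffit que $E_{n,w_n}$ contienne les $k_{v_a}(\theta_{a,i_j})$, ce qui découle de l'unicité de la place au-dessus de $v_a$ dans chaque $k(\theta_{a,i_j})$, puis la disjonction linéaire locale donne $[E_{n,w_n}:k_{v_a}]\geq p^n$.
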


\begin{proof}
 Le fait que l'ensemble $\Lambda_a$ forme une famille d'extensions lin\'eairement disjointes de $k$
 et que pour toute place $w|v_a$, l'extension $L_{a,w}/k_{v_a}$ soit infinie d\'ecoule directement du lemme~\ref{propfamillealindisj}.
 De plus, les extensions $k(\theta_{a,i})/k$ \'etant ab\'eliennes de groupe de Galois isomorphe \`a $\Z/p\Z$,
 on en d\'eduit que l'extension $L_a$ est une $p$-extension ab\'elienne de~$k$.
\end{proof}

\begin{rema}
 Pour tout $b\in\Pcal\cup\left\{\frac{1}{T}\right\}\setminus\{a\}$ et toute place $w|v_b$ de~$L_a$,
 on a $\left[L_{a,w}:k_{v_b}\right]\leq p$.
 En effet, les extensions $k(\theta_{a,i})/k$ \'etant de degr\'e~$p$ et non ramifi\'ees au-dessus de $b$ d'apr\`es le lemme~\ref{lemPajirrkv},
 l'extension~$L_a/k$ est ab\'elienne, d'exposant $p$ et non ramifi\'ee au-dessus de $b$.
 Il en est donc de m\^eme des compl\'et\'es. Ainsi
 l'extension~$L_{a,w}$ est une $p$-extension ab\'elienne de $k_{v_b}$ qui est non ramifi\'ee au-dessus de $b$.
 Or les extensions non ramifi\'ees d'un corps local sont cycliques (\cf prop. p.~$113$ de \cite{FESVOS})
 ce qui n'est pas le cas de l'extension $L_{a,w}/k_{v_b}$
 d\`es que $\left[L_{a,w}:k_{v_b}\right]>p$ puisque son groupe de Galois est isomorphe \`a un produit de $\Z/p\Z$.
\end{rema}


Nous sommes maintenant en mesure de donner le contre-exemple annonc\'e:

\begin{theo}\label{coropextabinfdeglocinf}
 Soit $L$\label{notaL5} le compositum des corps de la famille $\Lambda$ de la remarque \ref{propfamillealllindisj}.
 Alors, $L$ est une $p$-extension ab\'elienne de~$k$ dont les degr\'es locaux sur $L$ sont infinis au-dessus de toutes les places de~$k$.
\end{theo}

\begin{proof}
 Comme $L$ est un compositum d'extensions de degr\'e~$p$, c'est clairement une $p$-extension ab\'elienne de $k$.
 Le fait que tous les degr\'es locaux sur $L$ soient infinis au-dessus des places de $k$ provient de la proposition \ref{coroalocalinf}.
 En effet, si $a\in\Pcal\cup\left\{\frac{1}{T}\right\}$, alors $L_a\subset L$.
\end{proof}


Nous terminons cette partie en donnant un exemple d'une extension galoisienne infinie de $k$
dont le groupe de Galois est d'exposant divisible par la caract\'eristique de $k$
et dont n\'eanmoins les degr\'es locaux sont uniform\'ement born\'es. 

\begin{prop}
 Soient $i\in\N\setminus p\N$ et $L_i$\label{notaLi} le compositum des corps de la famille
 \[\Lambda_i:=\left\{k(\theta_{a,i})\,\big|\,a\in\Pcal\cup\left\{\frac{1}{T}\right\}\right\}\label{notaLambdai}\text{.}\]
 Alors, $L_i$ est une $p$-extension ab\'elienne infinie de $k$ dont les degr\'es locaux sont uniform\'ement born\'es par $p^2$.
\end{prop}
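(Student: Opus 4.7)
Le plan est de proc\'eder en deux \'etapes : d'abord \'etablir que $L_i/k$ est une $p$-extension ab\'elienne infinie, puis borner par $p^2$ le degr\'e local de $L_i$ en toute place de~$k$.

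Pour la premi\`ere \'etape, j'utiliserais le point~$5$ du lemme~\ref{lemPajirrkv}, qui affirme que chaque $k(\theta_{a,i})/k$ est cyclique de degr\'e~$p$, ainsi que la remarque~\ref{propfamillealllindisj}, dont il r\'esulte que la sous-famille $\Lambda_i\subset\Lambda$ est lin\'eairement disjointe sur~$k$. Cela identifierait $\Gal(L_i/k)$ au produit $\prod_{a\in\Pcal\cup\{1/T\}}\Z/p\Z$ (muni de la topologie produit), qui est infini puisque $\Pcal$ l'est. En particulier, $L_i/k$ serait bien une $p$-extension ab\'elienne infinie d'exposant~$p$.

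Pour la seconde \'etape, je fixerais $b\in\Pcal\cup\{1/T\}$ et une place~$w$ de $L_i$ au-dessus de $v_b$. D'apr\`es le point~$7$ du lemme~\ref{lemPajirrkv}, pour tout $a\neq b$ l'extension locale $k_{v_b}(\theta_{a,i})/k_{v_b}$ est non ramifi\'ee ; le compositum~$M$ de ces extensions dans $L_{i,w}$ est donc une extension non ramifi\'ee de $k_{v_b}$. Comme le corps r\'esiduel de $k_{v_b}$ est fini (y compris dans le cas $b=1/T$), le groupe de Galois de la cl\^oture non ramifi\'ee maximale de $k_{v_b}$ est procyclique (isomorphe \`a $\widehat{\Z}$), si bien que tout quotient d'exposant divisant~$p$ est cyclique d'ordre au plus~$p$. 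L'extension $M/k_{v_b}$ \'etant d'exposant divisant $p$, on obtiendrait ainsi $[M:k_{v_b}]\leq p$.

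Pour conclure, les points~$4$ et~$5$ du lemme~\ref{lemPajirrkv} entra\^{\i}nent $[k_{v_b}(\theta_{b,i}):k_{v_b}]=p$. Comme $L_{i,w}$ est engendr\'e sur $k_{v_b}$ par $M$ et par $k_{v_b}(\theta_{b,i})$, on aurait $[L_{i,w}:k_{v_b}]\leq [M:k_{v_b}]\cdot[k_{v_b}(\theta_{b,i}):k_{v_b}]\leq p^2$, d'o\`u la borne cherch\'ee. Le point principal est la majoration $[M:k_{v_b}]\leq p$, qui repose sur la procyclicit\'e du groupe de Galois non ramifi\'e maximal de~$k_{v_b}$; cet argument remplace ici l'argument de ramification mod\'er\'ee utilis\'e dans la preuve du th\'eor\`eme~\ref{thexpborne}, puisque la ramification de $L_i/k$ en $b$ est sauvage.
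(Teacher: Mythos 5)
Votre démonstration est correcte et suit essentiellement la même démarche que celle de l'article : décomposition de $L_{i,w}$ en le compositum de la partie totalement ramifiée $k_{v_b}(\theta_{b,i})$ (de degré $p$) et de la partie non ramifiée, dont le degré est majoré par $p$ parce qu'une extension non ramifiée d'un corps local est cyclique alors que le groupe est d'exposant $p$. Votre formulation via la procyclicité de $\Gal(k_{v_b}^{\mathrm{nr}}/k_{v_b})\cong\widehat{\Z}$ est simplement une autre façon d'énoncer l'argument de l'article.
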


\begin{proof}
 L'extension $L_i/k$ \'etant une sous-extension de l'extension $L/k$ (\cf th. \ref{coropextabinfdeglocinf}),
 c'est une $p$-extension ab\'elienne infinie de~$k$.
 Soient $a\in\Pcal\cup\left\{\frac{1}{T}\right\}$ et $L_{i\setminus a}$\label{notaLisansa}
 le compositum des corps de la famille
 $\Lambda_{i\setminus a}:=
  \left\{k(\theta_{b,i})\,\big|\,b\in\Pcal\cup\left\{\frac{1}{T}\right\}\setminus\{a\}\right\}$\label{notaLambdaisansa}.
 Alors $L_i$ est le compositum de $k(\theta_{a,i})$ et de $L_{i\setminus a}$.
 Soit $w|v_a$ une place de $L_i$, on note encore $w$ la restriction de $w$ \`a $L_{i\setminus a}$.
 L'extension $k(\theta_{a,i})/k$ \'etant totalement ramifi\'ee au-dessus de $a$, on a $\left[k_{v_a}(\theta_{a,i}):k_{v_a}\right]=p$.
 D'autre part, l'extension $L_{i\setminus a}/k$ est une $p$-extension ab\'elienne infinie qui est non ramifi\'ee au-dessus de $a$.
 Or les extensions non ramifi\'ees d'un corps local sont cycliques (\cf prop. p.~$113$ de \cite{FESVOS})
 ce qui n'est pas le cas de l'extension $L_{i\setminus a,w}/k_{v_a}$
 d\`es que $\left[L_{i\setminus a,w}:k_{v_a}\right]>p$.
 On obtient donc:
 \[\left[L_{i,w}:k_{v_a}\right]\leq\left[L_{i\setminus a,w}:k_{v_a}\right]\,\left[k_{v_a}(\theta_{a,i}):k_{v_a}\right]\leq p^2\text{.}\]
 D'o\`u le r\'esultat annonc\'e.
\end{proof}

\backmatter

\section*{Remerciements}

Je souhaite remercier Francesco Amoroso et Vincent Bosser mes directeurs de th\`ese ainsi que Bruno Angl\`es pour toutes les discussions que nous avons eus \`a propos de ce travail.

\bibliographystyle{smfalpha}
\bibliography{mabiblio}

\def\cprime{$'$} \def\cprime{$'$} \def\cprime{$'$} \def\cprime{$'$}
  \def\cprime{$'$} \def\cprime{$'$}
\providecommand{\bysame}{\leavevmode ---\ }
\providecommand{\og}{``}
\providecommand{\fg}{''}
\providecommand{\smfandname}{et}
\providecommand{\smfedsname}{\'eds.}
\providecommand{\smfedname}{\'ed.}
\providecommand{\smfmastersthesisname}{M\'emoire}
\providecommand{\smfphdthesisname}{Th\`ese}
\begin{thebibliography}{Che10}

\bibitem[CD13]{CHECCODEB}
{\scshape S.~Checcoli {\normalfont \smfandname} P.~D\`ebes} -- {\og Tchebotarev
  theorems for function fields\fg}, 2013, [arXiv:1301.1815].

\bibitem[Che10]{SaraTh}
{\scshape S.~Checcoli} -- {\og On fields of algebraic numbers with bounded
  local degrees\fg}, \smfphdthesisname, Universit\'e de Pise, 2010.

\bibitem[FJ08]{FRIEDJARDEN}
{\scshape M.~D. Fried {\normalfont \smfandname} M.~Jarden} -- \emph{Field
  arithmetic}, third \smfedname, Ergebnisse der Mathematik und ihrer
  Grenzgebiete. 3. Folge. A Series of Modern Surveys in Mathematics [Results in
  Mathematics and Related Areas. 3rd Series. A Series of Modern Surveys in
  Mathematics], vol.~11, Springer-Verlag, Berlin, 2008, Revised by Jarden.

\bibitem[FV93]{FESVOS}
{\scshape I.~B. Fesenko {\normalfont \smfandname} S.~V. Vostokov} --
  \emph{Local fields and their extensions}, Translations of Mathematical
  Monographs, vol. 121, American Mathematical Society, Providence, RI, 1993, A
  constructive approach, With a foreword by I. R. Shafarevich.

\bibitem[Ros02]{ROSEN}
{\scshape M.~Rosen} -- \emph{Number theory in function fields}, Graduate Texts
  in Mathematics, vol. 210, Springer-Verlag, New York, 2002.

\bibitem[Sti09]{STICHT}
{\scshape H.~Stichtenoth} -- \emph{Algebraic function fields and codes}, second
  \smfedname, Graduate Texts in Mathematics, vol. 254, Springer-Verlag, Berlin,
  2009.

\end{thebibliography}

\end{document}